\documentclass[letterpaper,10 pt, conference]{ieeeconf}
\IEEEoverridecommandlockouts

\usepackage{amsmath, amssymb}
\usepackage{mathtools}
\usepackage{pgfplots}
\usepackage{tikz}


\DeclarePairedDelimiter\lr{\lparen}{\rparen}

\usetikzlibrary{arrows,automata, patterns, calc,decorations.pathmorphing,decorations.markings}

\newcommand{\bbR}{\mathbb{R}}

\newcommand{\calC}{\mathcal{C}}

\newcommand{\btab}{\begin{center}\def\arraystretch{1.5}\begin{tabular}}
        \newcommand{\etab}{\end{tabular}\end{center}}
\newcommand{\bbm}{\begin{bmatrix*}}
    \newcommand{\ebm}{\end{bmatrix*}}
\newcommand{\bvm}{\begin{vmatrix*}}
    \newcommand{\evm}{\end{vmatrix*}}

\newcommand{\inv}{^{-1}}
\renewcommand{\t}{^\top}

\newcommand{\set}[2]{\left\{ #1 ~\left|~ \vphantom{#1} #2 \right. \right\}}

\newcommand{\qand}{\quad\text{and}\quad}

\newcommand{\ddt}[1]{\tfrac{\d^{#1}}{\d t^{#1}}}
\newcommand{\pddt}{\lr*{\ddt{}}}
\renewcommand{\d}{\textup{d}}


\newcommand{\cinf}[1]{\calC^\infty_{#1}}
\newcommand{\sys}{{\Sigma}}
\newcommand{\ass}{\text{\upshape A}}
\newcommand{\env}{\text{\upshape E}}
\newcommand{\gar}{{\Gamma}}
\newcommand{\con}{\mathcal{C}}
\newcommand{\contract}[1]{\con_{#1} = (\ass_{#1}, \gar_{#1})}

\newcommand{\simby}{\preccurlyeq}

\newcommand{\meet}{\wedge}
\newcommand{\join}{\vee}

\newcommand{\B}[1]{\mathfrak{B}\lr{#1}}

\newcommand{\Bi}[1]{\mathfrak{B}_{\textup{i}}\lr{#1}}

\newcommand{\Bo}[1]{\mathfrak{B}_{\textup{o}}\lr{#1}}

\renewcommand{\epsilon}{\varepsilon}

\newtheorem{theorem}{Theorem}
\newtheorem{lemma}[theorem]{Lemma}

\newtheorem{remark}{Remark}
\newtheorem{definition}{Definition}
\newtheorem{example}{Example}

\title{Behavioural assume-guarantee contracts for linear dynamical systems}
\author{B. M. Shali, A. J. van der Schaft, B. Besselink\thanks{The authors are with the Jan C. Willems Center for Systems and Control, and the Bernoulli Institute for Mathematics, Computer Science, and Artificial Intelligence, University of Groningen, Groningen, The Netherlands; Email: {\emph{b.m.shali@rug.nl}}; \emph{a.j.van.der.schaft@rug.nl}; \emph{b.besselink@rug.nl}.}}
\begin{document}
	\maketitle

    \begin{abstract}
        Motivated by the growing requirements on the operation of complex engineering systems, we present contracts as specifications for continuous-time linear dynamical systems with inputs and outputs. A contract is defined as a pair of assumptions and guarantees, both characterized in a behavioural framework. The assumptions encapsulate the available information about the dynamic behaviour of the environment in which the system is supposed to operate, while the guarantees express the desired dynamic behaviour of the system when interconnected with relevant environments. In addition to defining contracts, we characterize contract implementation, and we find necessary conditions for the existence of an implementation. We also characterize contract refinement, which is used to characterize contract conjunction in two special cases. These concepts are then illustrated by an example of a vehicle following system.
    \end{abstract}

    \section{Introduction}

    Contemporary engineering systems are becoming increasingly more complex. They often comprise a large number of interconnected components, which can be quite complex themselves, thus making the design and analysis of the overall system prohibitively difficult. One way to circumvent this problem is by employing a method for design and analysis that is inherently \emph{modular}, i.e., that allows components to be considered independently. Contract-based design, which finds its origins in the field of software engineering \cite{meyer1992}, is precisely such a method.

    Motivated by this, we present assume-guarantee contracts for continuous-time linear dynamical systems with inputs and outputs. There are many different types of contract theories depending on the context in which they are developed \cite{benveniste2018}. Nevertheless, they all share a common objective, namely, to support the independent design of components within interconnected systems. This is typically done by defining appropriate concepts, such as refinement and composition, that satisfy certain compositionality properties. The concepts and philosophy behind contract theories are captured in a mathematical meta-theory of contracts presented in \cite{benveniste2018}.

    The contracts in this paper provide an alternative to common methods for expressing system specifications in control, such as dissipativity \cite{willems1972} and set-invariance \cite{blanchini1999}. While the latter represent specifications that are typically static, e.g., static supply rates and invariant sets, the contracts in this paper are used to specify dynamic behaviour. Furthermore, the contracts in this paper express specifications directly in the continuous domain, unlike formal methods in control \cite{tabuada2009}, which usually require the abstraction to discrete transition systems in order to express specifications using LTL.

    We make the following contributions in this paper. First, we define assume-guarantee contracts for linear systems, where the assumptions and guarantees are linear systems that represent an expected input behaviour and a desired external behaviour, respectively. These contracts allow us to express rich specifications directly in the continuous domain. Second, we define and characterize contract implementation, which leads to necessary conditions for contract consistency, i.e., the existence of an implementation. Third, we define and characterize contract refinement, which we then use to define contract conjunction and characterize it in two special cases.

    The contracts presented in this paper are a generalization of the contracts in \cite{shali2021}, and are closely related to the contracts in \cite{besselink2019}. In contrast to \cite{shali2021}, where the guarantees specify only a set of admissible output trajectories, the guarantees in this paper can also specify a particular relationship between input and output trajectories. As such, the contracts in this paper truly specify input-output behaviour and enable the expression of significantly richer specifications than \cite{shali2021}. At the same time, the developed theory is based on the equations representing linear systems, not on their solutions, which leads to conditions that are much easier to verify.

    Different types of contracts have already been used as specifications for dynamical systems. A small-gain theorem for parametric assume-guarantee contracts is presented in \cite{kim2017} and used for controller synthesis in \cite{khatib2020}. These contracts are also adopted in \cite{chen2021} for safety-critical control synthesis in network systems. On the other hand, assume-guarantee contracts that capture invariance properties are presented in \cite{saoud2018} and used for symbolic controller synthesis in \cite{saoud2018b, saoud2021}. Applications of these contracts can be found in \cite{loreto2020, zonetti2019}. Contracts for safety are presented in \cite{eqtami2019}, while the contracts in \cite{ghasemi2020} are used for finite-time reach and avoid or infinite-time invariance. Finally, the authors of \cite{sharf2021} focus on using contracts to represent linear constraints, and develop efficient computational tools based on linear programming. As opposed to the contracts in this paper, which are used to express specifications on the dynamics of continuous-time systems, the contracts in \cite{kim2017, khatib2020, chen2021, sharf2021} are defined only for discrete-time systems, and the contracts in \cite{saoud2018, saoud2018b, saoud2021, loreto2020, zonetti2019, eqtami2019} cannot express specifications on dynamics.

    The remainder of this paper is structured as follows. In Section~\ref{sec:system_class}, we introduce the class of systems that will be considered in this paper. Contracts are introduced in Section~\ref{sec:contracts}, where we define and characterize contract implementation, and discuss contract consistency. Contract refinement and conjunction are treated in Section~\ref{sec:refinement_and_conjunction}. We demonstrate the theoretical framework with an illustrative example in Section~\ref{sec:example}, followed by a conclusion in Section~\ref{sec:conclusion}.

    The notation in this paper is mostly standard. We denote the space of smooth functions from $\bbR$ to $\bbR^n$ by $\cinf{n}$. A \emph{polynomial matrix} is a matrix whose entries are polynomials, and a \emph{rational matrix} is a matrix whose entries are rational functions. Throughout this paper, all polynomials are univariate and have real coefficients. A rational function is \emph{proper} if the degree of its denominator is greater than or equal to the degree of its numerator, and a rational matrix is proper if all of its entries are proper rational functions. A square polynomial matrix $P(s)$ is called \emph{invertible} if there exists a rational matrix $Q(s)$ such that $P(s)Q(s) = I$. If there exists a \emph{polynomial} matrix $Q(s)$ such that $P(s)Q(s) = I$, then $P(s)$ is called \emph{unimodular}. In both cases, the matrix $Q(s)$ is referred to as the \emph{inverse} of $P(s)$ and is denoted by $P(s)\inv$.

    \section{Models of physical systems}\label{sec:system_class}

    In this paper, we will consider systems of the form
    \begin{equation}\label{eq:sys_iso}
        \sys: \left\lbrace
        \begin{aligned}
            \dot x &= Ax + Bu,\\
            y &= Cx + Du,
        \end{aligned}\right.
    \end{equation}
    where $x\in\cinf{n}$ is the state trajectory, $u\in\cinf{m}$ is the input trajectory, and $y\in\cinf{p}$ is the output trajectory. We regard $u$ and $y$ as external variables that can interact with the environment, while $x$ is internal, as illustrated in Figure~\ref{fig:sys}.
    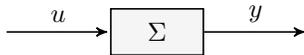
\begin{figure}[!htpb]
        \centering
        \begin{tikzpicture}[->,>=stealth',shorten >=1pt,auto,node distance=3cm,
            semithick]
            \tikzset{box/.style = {shape = rectangle,
                    color=black,
                    fill=white!96!black,
                    text = black,
                    inner sep = 5pt,
                    minimum width = 35pt,
                    minimum height = 17.5pt,
                    draw}
            }

            \node[box] (S2) at (2,0) {$\sys$};

            \draw (0,0) -- node[midway, above] {$u$} (S2);
            \draw (S2) -- node[midway, above] {$y$} (4,0);
        \end{tikzpicture}
        \caption{The system $\sys$ as a signal processor.}
        \label{fig:sys}
        \vspace{-3mm}
    \end{figure}

   Since we are interested in the interaction of a system with its environment, we consider the \emph{external behaviour}
    \begin{equation}
        \B{\sys} = \set{(u,y)\in\cinf{m+p}}{\exists x\in\cinf{n} \text{ s.t. }\eqref{eq:sys_iso} \text{ holds}}.
    \end{equation}
    In the behavioural approach to systems theory \cite{polderman1998, willems2007a}, the system $\sys$ is seen as a representation of its external behaviour $\B{\sys}$. In view of \cite[Theorem~6.2]{willems1983}, the same external behaviour can always be represented by a system of the form
    \begin{equation}\label{eq:sys_io}
        \sys: P\pddt y = Q\pddt u,
    \end{equation}
    where $u\in\cinf{m}$, $y\in\cinf{p}$, and $P(s)$ and $Q(s)$ are polynomial matrices such that $P(s)$ is invertible and $P(s)\inv Q(s)$ is proper. If these conditions on $P(s)$ and $Q(s)$ are satisfied, then we say that $\sys$ of the form \eqref{eq:sys_io} is in \emph{input-output form} \cite[Section~3.3]{polderman1998}. As \eqref{eq:sys_io} involves only the external variables $u$ and $y$, we will represent external behaviours by systems $\sys$ of the form \eqref{eq:sys_io} in input-output form.

    The concept of behaviour allows one to compare different systems. In particular, if $\B{\sys_1}\subset\B{\sys_2}$, then for a given input trajectory $u\in\cinf{m}$, the set of output trajectories produced by $\sys_1$ is contained in the set of output trajectories produced by $\sys_2$, hence $\sys_2$ can be interpreted as having richer dynamics than $\sys_1$. Behavioural inclusion plays a major role in the definition of a contract and its related concepts. The following theorem, whose proof can be found in \cite{shali2021}, see also \cite{polderman2000}, provides an algebraic characterization of behavioural inclusion that will be used in the following sections.
    \begin{theorem}[{\cite[Theorem~2]{shali2021}}]\label{thm:inclusion}
        Consider the behaviours
        \begin{equation*}
            \mathfrak{B}_j = \set{w\in\cinf{k}}{R_j\pddt w = 0},\ j\in\{1,2\},
        \end{equation*}
        where $R_1(s)$ and $R_2(s)$ are polynomial matrices. Then $\mathfrak{B}_1 \subset \mathfrak{B}_2$ if and only if there exists a polynomial matrix $M(s)$ such that $R_2(s) = M(s) R_1(s)$.
    \end{theorem}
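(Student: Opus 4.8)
The plan is to prove the two implications separately, with the reverse implication being routine and the forward implication carrying all the content. For the ``if'' direction, suppose $R_2(s) = M(s)R_1(s)$ for some polynomial matrix $M(s)$. Using that the assignment $P(s)\mapsto P\pddt$ sends matrix products to operator compositions, for any $w\in\mathfrak{B}_1$ we get $R_2\pddt w = M\pddt\lr*{R_1\pddt w} = 0$, so $w\in\mathfrak{B}_2$ and hence $\mathfrak{B}_1\subset\mathfrak{B}_2$.

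For the ``only if'' direction, I would first note that $\mathfrak{B}_1\subset\mathfrak{B}_2$ means precisely that every row of $R_2\pddt$ annihilates $\mathfrak{B}_1$, so it suffices to show that any polynomial row vector $r(s)$ with $r\pddt w = 0$ for all $w\in\mathfrak{B}_1$ lies in the polynomial row span of $R_1$; applying this to each row of $R_2$ and stacking the resulting multipliers then yields $M(s)$. To analyse a single row, I would bring $R_1$ to Smith form, writing $R_1(s) = U(s)\Sigma(s)V(s)$ with $U(s)$ and $V(s)$ unimodular and $\Sigma(s)$ carrying the nonzero invariant factors $d_1(s),\dots,d_r(s)$ on its leading diagonal entries and zeros elsewhere. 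Since $U\pddt$ is invertible with polynomial inverse it does not affect the kernel, and since $V\pddt$ is a bijection on smooth trajectories, the substitution $\bar w = V\pddt w$ identifies $\mathfrak{B}_1$ with the trajectories satisfying $d_i\pddt\bar w_i = 0$ for $i\le r$ and no constraint on the remaining components. In these coordinates the row condition reads $\bar r\pddt\bar w = 0$ with $\bar r = rV(s)\inv$.

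Testing $\bar r$ against trajectories supported on a single free component $\bar w_j$ with $j>r$ forces $\bar r_j = 0$, because the only polynomial whose operator annihilates every smooth function is the zero polynomial; testing against the constrained components reduces everything to the scalar claim that if $d(s)$ and $\rho(s)$ are polynomials such that every solution of $d\pddt f = 0$ also satisfies $\rho\pddt f = 0$, then $d(s)$ divides $\rho(s)$. This scalar divisibility statement is where the real work lies and is the step I expect to be the main obstacle. The solution space of $d\pddt f = 0$ is spanned by the polynomial-exponential functions $t^\ell e^{\lambda t}$ ranging over the roots $\lambda$ of $d$ and the exponents $\ell$ below their multiplicities (with the usual real-imaginary splitting for complex conjugate roots, since we work over real coefficients), and requiring $\rho\pddt$ to kill all of them forces $\rho$ to vanish to at least the same order at each root of $d$, which is exactly $d\mid\rho$; concretely this is the assertion that $\cinf{}$ is rich enough to detect polynomial divisibility, i.e., the injective cogenerator property of the smooth functions. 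Granting the scalar claim, each constrained component gives $\bar r_j = m_j(s)d_j(s)$, so $\bar r = m'(s)\Sigma(s)$ for a polynomial row $m'(s)$, and undoing the transformations yields $r = m'(s)U(s)\inv R_1(s)$, a polynomial combination of the rows of $R_1$, which completes the argument.
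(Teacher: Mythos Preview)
The paper does not actually prove this theorem: it is quoted verbatim from \cite{shali2021} and the text merely says ``whose proof can be found in \cite{shali2021}, see also \cite{polderman2000}''. There is therefore no in-paper argument to compare your proposal against.

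That said, your proposal is correct and is precisely the classical route taken in the behavioural literature (and in the cited references): reduce to a single row, bring $R_1$ to Smith form, use that unimodular transformations act bijectively on $\cinf{k}$, and then appeal to the scalar fact that the solution set of $d\pddt f=0$ determines $d$ up to divisibility, i.e., the injective cogenerator property of $\cinf{}$. Your handling of the free coordinates ($j>r$ forces $\bar r_j=0$) and of the constrained coordinates (root-by-root multiplicity count via $t^\ell e^{\lambda t}$) is the standard one, and the remark about real versus complex roots is the right caveat; divisibility over $\bbC[s]$ for two real polynomials descends to $\bbR[s]$, so no gap arises there. In short: your argument is sound and matches what the references the paper points to do, but there is nothing in the present paper itself to compare it with.
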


    \section{Contracts}\label{sec:contracts}

    In this section, we will define contracts and show how they can be used as specifications. Consider a system $\sys$ of the form \eqref{eq:sys_io} in input-output form. As an open system, $\sys$ typically operates in interconnection with its environment, which is responsible for providing input trajectories. In view of this, we define an \emph{environment} $\env$ as a system of the form
    \begin{equation}\label{eq:env}
    	\env:  0 = E\pddt u,
    \end{equation}
    where $u\in\cinf{m}$ and $E(s)$ is a polynomial matrix. The environment $\env$ defines the \emph{input behaviour}
    \begin{equation*}
    	\Bi{\env} = \set{u\in\cinf{m}}{\eqref{eq:env} \text{ holds}},
    \end{equation*}
    and the interconnection of $\sys$ with $\env$ is obtained by setting the input generated by $\env$ as input of $\sys$. This yields the system
    \begin{equation}\label{eq:env_sys}
    	\env\meet\sys: \bbm P\pddt \\[1mm] 0 \ebm y = \bbm Q\pddt \\[1mm] E\pddt \ebm u,
    \end{equation}
    which is represented graphically in Figure~\ref{fig:interconnection}.
    \begin{figure}[!htpb]
        \centering
        \begin{tikzpicture}[->,>=stealth',shorten >=1pt,auto,node distance=3cm,
            semithick]
            \tikzset{box/.style = {shape = rectangle,
                    color=black,
                    fill=white!96!black,
                    text = black,
                    inner sep = 5pt,
                    minimum width = 35pt,
                    minimum height = 17.5pt,
                    draw}
            }

            \node at (-2,0) {};
            \node[box] (S1) at (0,0) {$\env$};
            \node[box] (S2) at (2.5,0) {$\sys$};
            \node (Y) at (4.5,0) {};

            \draw (S1) -- node[midway] (UM) {} (S2);
            \draw (S2) -- node[midway, above] {$y$} (Y);

            \draw (UM.south) -- ([yshift=15] UM.south) -- node[pos=.8, above] {$u$} ([yshift=15] Y.west);

            \draw[dashed,-] ([xshift=5, yshift=25] S2.east) -- ([xshift=5, yshift=-15] S2.east) -- ([xshift=-5, yshift=-15] S1.west) -- ([xshift=-5, yshift=25] S1.west) -- ([xshift=5, yshift=25] S2.east);

        \end{tikzpicture}
        \caption{The interconnection $\env\meet\sys$.}
        \label{fig:interconnection}
        \vspace{-3mm}
    \end{figure}

    As a design goal, we are  interested  in  guaranteeing  properties of the external behaviour $\B{\env\meet\sys}$. Since this is partially determined by the environment $\env$, any available information about $\env$ can ease the design burden on the system $\sys$ and should thus be taken into account. To formalize this, we will introduce another two systems. First, the assumptions $\ass$ are a system of the form
    \begin{equation}\label{eq:ass}
    	\ass: 0 = A\pddt u,
    \end{equation}
    where $u\in\cinf{m}$ and $A(s)$ is a polynomial matrix. Just like an environment, the assumptions $\ass$ represent the input behaviour $\Bi{\ass}$. Second, the guarantees $\gar$ are a system of the form
    \begin{equation}\label{eq:gar}
    	\gar: G\pddt y = H\pddt u,
    \end{equation}
    where $u\in\cinf{m}$, $y\in\cinf{p}$, and $G(s)$ and $H(s)$ are polynomial matrices. The guarantees $\gar$ represent the external behaviour $\B{\gar}$, just like the interconnection of a system with its environment. Then contracts are defined as follows.
    \begin{definition}\label{def:contract}
    	A \emph{contract} $\con$ is a pair $(\ass, \gar)$ of assumptions $\ass$ and guarantees $\gar$.
    \end{definition}

    The interpretation of a contract is given in the following definition.
    \begin{definition}
    	An environment $\env$ is \emph{compatible} with the contract $\contract{}$ if $\Bi{\env}\subset\Bi{\ass}$. A system $\sys$ of the form \eqref{eq:sys_io} in input-output form \emph{implements} $\con$ if $\B{\env\meet\sys}\subset\B{\gar}$ for all environments $\env$ compatible with $\con$. In such a case, we say that $\sys$ is an \emph{implementation} of $\con$.
    \end{definition}

    In other words, the assumptions capture the available information about the environment, thus leading to a class of compatible environments, while the guarantees represent the desired behaviour of the system when interconnected with any compatible environment, thus leading to a class of implementations. These two aspects of a contract constitute a formal specification for the external behaviour of a system.
    \begin{remark}
        The assume-guarantee contracts presented in this paper are very closely related to the assume-guarantee contracts presented in \cite{shali2021}. The main difference is in the guarantees, which represent an external behaviour in this paper, rather than an output behaviour only, as in \cite{shali2021}. This means that the contracts presented in this paper specify a \emph{relationship} between inputs and outputs, not only a set of admissible outputs. In any case, the contracts in \cite{shali2021} are a special case of the contracts here, obtained by restricting guarantees to be of the form \eqref{eq:gar} with $H(s) = 0$.
    \end{remark}

	Definition~\ref{def:contract} suggests that checking contract implementation requires the construction of all compatible environments. The following theorem shows that this is not necessary.
	\begin{theorem}\label{thm:implementation}
		A system $\sys$ as in \eqref{eq:sys_io} in input-output form implements $\contract{}$ if and only if $\B{\ass\meet\sys} \subset \B{\gar}$.
	\end{theorem}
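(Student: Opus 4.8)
The plan is to prove both implications directly from the definitions; no appeal to the algebraic characterization of Theorem~\ref{thm:inclusion} is required. The key observation is that the assumptions $\ass$ in \eqref{eq:ass} have precisely the form of an environment \eqref{eq:env}, so $\ass$ may itself be regarded as an environment with input behaviour $\Bi{\ass}$. Before either direction, I would read off from the stacked representation \eqref{eq:env_sys} that a pair $(u,y)$ belongs to $\B{\env\meet\sys}$ exactly when $(u,y)\in\B{\sys}$ and $u\in\Bi{\env}$; that is, $\B{\env\meet\sys} = \set{(u,y)\in\B{\sys}}{u\in\Bi{\env}}$, and similarly $\B{\ass\meet\sys} = \set{(u,y)\in\B{\sys}}{u\in\Bi{\ass}}$. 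In particular, the interconnection behaviour depends on the environment only through its input behaviour and is monotone in it: a smaller input behaviour yields a smaller interconnection behaviour.

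For the ``if'' direction, I would assume $\B{\ass\meet\sys}\subset\B{\gar}$ and take an arbitrary environment $\env$ compatible with $\con$, so that $\Bi{\env}\subset\Bi{\ass}$. By the monotonicity just noted, $\B{\env\meet\sys}\subset\B{\ass\meet\sys}\subset\B{\gar}$, and since $\env$ was arbitrary, $\sys$ implements $\con$. For the ``only if'' direction, I would exploit that $\ass$, viewed as an environment, is itself compatible with $\con$, since the condition $\Bi{\ass}\subset\Bi{\ass}$ holds trivially. Applying the definition of implementation to this particular environment immediately yields $\B{\ass\meet\sys}\subset\B{\gar}$.

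I do not expect a genuine obstacle here, as the statement is essentially definitional once the interconnection behaviour is written out. The only step requiring care is that first rewriting of $\B{\env\meet\sys}$ as the set of external trajectories of $\sys$ whose input lies in $\Bi{\env}$, which must be justified directly from \eqref{eq:env_sys}. The conceptual heart of the argument is the recognition that the assumptions $\ass$ constitute the maximally permissive compatible environment, so that checking the guarantees against $\ass$ alone already certifies them against every compatible environment.
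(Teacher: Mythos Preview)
Your proposal is correct and follows essentially the same argument as the paper's own proof: for the ``if'' direction you use $\Bi{\env}\subset\Bi{\ass}$ to obtain $\B{\env\meet\sys}\subset\B{\ass\meet\sys}\subset\B{\gar}$, and for the ``only if'' direction you observe that $\ass$ is itself a compatible environment. The only difference is that you make the decomposition $\B{\env\meet\sys}=\set{(u,y)\in\B{\sys}}{u\in\Bi{\env}}$ explicit up front, whereas the paper unfolds it inline during the ``if'' direction; the logic is identical.
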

	\begin{proof}
		Suppose that $\B{\ass\meet\sys}\subset\B{\gar}$ and let $\env$ be an environment compatible with $\con$. If $(u,y)\in\B{\env\meet\sys}$ then $u\in\Bi{\env}\subset\Bi{\ass}$ and $(u,y)\in\B{\sys}$, hence $(u,y)\in\B{\ass\meet\sys}\subset\B{\gar}$. This shows that $\B{\env\meet\sys}\subset\B{\gar}$ for all compatible environments $\env$, thus $\sys$ implements $\con$.	Conversely, suppose that $\sys$ is an implementation of $\calC$. Then $\B{\ass\meet\sys}\subset\B{\gar}$ because $\ass$ is compatible with $\con$.
	\end{proof}
	
	\begin{remark}\label{rem:implementation}
		Suppose that $\sys$, $\ass$ and $\gar$ are given by \eqref{eq:sys_io}, \eqref{eq:ass} and \eqref{eq:gar}, respectively. Using Theorem~\ref{thm:inclusion} with $w = [y\t \quad u\t]\t$ shows that $\B{\ass\meet\sys}\subset\B{\gar}$ if and only if there exist polynomial matrices $M_1(s)$ and $M_2(s)$ such that
		\begin{equation*}
			\bbm G(s) & -H(s)\ebm = \bbm M_1(s) & M_2(s) \ebm\bbm P(s) & -Q(s)\\ 0 & -A(s) \ebm.
		\end{equation*}
		The existence of such polynomial matrices $M_1(s)$ and $M_2(s)$ can be verified using \cite[Lemma~1]{shali2021}.
	\end{remark}

    In practice, we are only interested in contracts that can be implemented, hence the following definition.
    \begin{definition}
        A contract $\con$ is \emph{consistent} if there exists at least one implementation of $\con$.
    \end{definition}

    Not all contracts are consistent. One reason is that implementations cannot restrict the input $u$, hence any restrictions on $u$ imposed by the guarantees must already be present in the assumptions. To formalize this, we introduce the following definition.
    
    \begin{definition}\label{def:input_behaviour}
    	Given a system that involves both $u$ and $y$, its \emph{input behaviour} is defined as the projection of its external behaviour onto $u$. For guarantees $\gar$, this means that
    	\begin{equation}
    		\Bi{\gar} = \set{u\in\cinf{m}}{\exists y\in\cinf{p} \text{ s.t. } (u,y)\in\B{\gar}}.
    		\vspace{2mm}
    	\end{equation}
    \end{definition}
    
	
	Using Definition~\ref{def:input_behaviour} we obtain the following necessary condition for consistency.
    \begin{lemma}\label{lem:consistency}
        The contract $\contract{}$ is consistent only if $\Bi{\ass}\subset\Bi{\gar}$. In this case, there exist guarantees
        \begin{equation}\label{eq:gar'}
            \gar': G'\pddt y = H'\pddt u,
        \end{equation}
        such that $G'(s)$ has full row rank and $\B{\ass\meet\gar} = \B{\ass\meet\gar'}$.
    \end{lemma}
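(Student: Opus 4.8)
The plan is to establish the two assertions in turn: first the necessary condition, and then the existence of the reduced guarantees under that condition.

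For the necessary condition, I would suppose $\con$ is consistent and let $\sys$ be an implementation, so that $\B{\ass\meet\sys}\subset\B{\gar}$ by Theorem~\ref{thm:implementation}. Fix $u\in\Bi{\ass}$, i.e. $A\pddt u = 0$. The feature to exploit is that $\sys$ is in input-output form: since $P(s)$ is invertible, the operator $P\pddt$ is surjective on $\cinf{p}$, so there is a $y\in\cinf{p}$ with $P\pddt y = Q\pddt u$, that is, $(u,y)\in\B{\sys}$. As also $u\in\Bi{\ass}$, we get $(u,y)\in\B{\ass\meet\sys}\subset\B{\gar}$, whence $u\in\Bi{\gar}$ by Definition~\ref{def:input_behaviour}. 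Since $u$ was arbitrary, $\Bi{\ass}\subset\Bi{\gar}$.

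For the construction, I would first isolate the pure input constraints hidden in $\gar$ by row-reducing $G(s)$. Choose a unimodular $U(s)$ with
\begin{equation*}
    U(s)G(s) = \bbm G'(s)\\ 0\ebm, \qquad U(s)H(s) = \bbm H'(s)\\ H_2(s)\ebm,
\end{equation*}
where $G'(s)$ has full row rank. Premultiplying \eqref{eq:gar} by the unimodular operator $U\pddt$ leaves $\B{\gar}$ unchanged, so $\B{\gar}$ is the set of $(u,y)$ satisfying $G'\pddt y = H'\pddt u$ and $H_2\pddt u = 0$. Because $G'(s)$ has full row rank, $G'\pddt$ is surjective, so $G'\pddt y = H'\pddt u$ is solvable for $y$ for every $u$; projecting onto $u$ then gives the kernel representation $\Bi{\gar} = \set{u\in\cinf{m}}{H_2\pddt u = 0}$.

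Finally, both $\Bi{\ass}$ and $\Bi{\gar}$ are now in kernel form, so the hypothesis $\Bi{\ass}\subset\Bi{\gar}$ and Theorem~\ref{thm:inclusion} (with $w = u$) yield a polynomial matrix $N(s)$ with $H_2(s) = N(s)A(s)$. Taking $\gar'$ as in \eqref{eq:gar'} with the above $G'(s)$ and $H'(s)$ gives guarantees whose $G'(s)$ has full row rank, and it remains to check $\B{\ass\meet\gar} = \B{\ass\meet\gar'}$. The inclusion $\B{\ass\meet\gar}\subset\B{\ass\meet\gar'}$ is immediate, since $\gar'$ is obtained from $\gar$ by discarding the constraint $H_2\pddt u = 0$; for the reverse inclusion, any $(u,y)$ with $A\pddt u = 0$ satisfies $H_2\pddt u = N\pddt A\pddt u = 0$, so the discarded constraint holds automatically. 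The step I expect to require the most care is justifying that full row rank of $G'(s)$ makes $G'\pddt$ surjective on $\cinf{}$, the fact that licenses the clean kernel description of $\Bi{\gar}$ and thereby the appeal to Theorem~\ref{thm:inclusion}; this rests on the Smith form of $G'(s)$ and the solvability of scalar linear differential equations.
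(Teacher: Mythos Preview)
Your proposal is correct and follows essentially the same route as the paper: use input-output form (freeness of $u$) to get $\Bi{\ass}\subset\Bi{\gar}$, then row-reduce $G(s)$ by a unimodular $U(s)$ to split off the pure input constraints $H_2\pddt u=0$, invoke Theorem~\ref{thm:inclusion} to write $H_2(s)=N(s)A(s)$, and conclude $\B{\ass\meet\gar}=\B{\ass\meet\gar'}$. The paper cites \cite[Theorem~6.2.6]{polderman1998} for the surjectivity/elimination step you flag as requiring care, but otherwise the arguments coincide.
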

	\begin{proof}
		Suppose that $\contract{}$ is consistent. In view of Theorem~\ref{thm:implementation}, this implies that there exists $\sys$ as in \eqref{eq:sys_io} in input-output form such that $\B{\ass\meet\sys}\subset\B{\gar}$. Since $\sys$ is in input-output form, it follows that $u$ is free in $\B{\sys}$, i.e., for all $u\in\cinf{m}$, there exists $y\in\cinf{p}$ such that $(u,y)\in\B{\sys}$. Consequently, we obtain $\Bi{\ass\meet\sys} = \Bi{\ass}$, hence $\B{\ass\meet\sys}\subset\B{\gar}$ only if $\Bi{\ass}\subset\Bi{\gar}$.
		
		We proceed with finding guarantees $\gar'$ of the form \eqref{eq:gar'} such that $G'(s)$ has full row rank and $\B{\ass\meet\gar'} = \B{\ass\meet\gar}$. Let $\gar$ be given by \eqref{eq:gar}. If $G(s)$ has full row rank, then we can take $\gar' = \gar$. Now, suppose that $G(s)$ does not have full row rank. In light of \cite[Theorem~6.2.6]{polderman1998}, there exists a unimodular matrix $U(s)$ such that
		\begin{equation}\label{eq:G'_H'}
			U(s)G(s) = \bbm G'(s) \\ 0 \ebm,\quad U(s)H(s) = \bbm H'(s) \\ H''(s) \ebm,
		\end{equation}
		and $G'(s)$ has full row rank. From the same theorem, we also know that the input behaviour $\Bi{\gar}$ is given by
		\begin{equation*}
			\Bi{\gar} = \set{u\in\cinf{m}}{H''\pddt u = 0}.
		\end{equation*}
		Since $\Bi{\ass}\subset\Bi{\gar}$, Theorem~\ref{thm:inclusion} implies that there exists a polynomial matrix $M(s)$ such that $H''(s) = M(s)A(s)$. We claim that the guarantees $\gar'$ given by \eqref{eq:gar'}, where $G'(s)$ and $H'(s)$ are given by \eqref{eq:G'_H'}, are such that $\B{\ass\meet\gar} = \B{\ass\meet\gar'}$. To see this, note that $(u,y)\in\B{\ass\meet\gar}$ if and only if
		\begin{equation}\label{eq:U_gar}
			\bbm G'\pddt \\ 0 \\ 0\ebm y = \bbm H'\pddt \\ H''\pddt \\ A\pddt \ebm u
		\end{equation}
		because $U(s)$ is unimodular and \eqref{eq:G'_H'} holds, see \cite[Theorem~2.5.4]{polderman1998}. But we have that $H''(s) = M(s) A(s)$, hence \eqref{eq:U_gar} holds if and only if
		\begin{equation}\label{eq:U_gar'}
			\bbm G'\pddt \\ 0\ebm y = \bbm H'\pddt \\ A\pddt \ebm u.
		\end{equation}
		Since \eqref{eq:U_gar'} holds if and only if $(u,y)\in\B{\ass\meet\gar'}$, we conclude that $\B{\ass\meet\gar} = \B{\ass\meet\gar'}$, as desired.
	\end{proof}


    The condition that $G'(s)$ has full row rank ensures that $\gar'$ does not restrict $u$, see the proof of \cite[Theorem~6.2.6]{polderman1998}. In view of Theorem~\ref{thm:implementation} and the fact that $\B{\ass\meet\sys}\subset\B{\gar}$ if and only if $\B{\ass\meet\sys}\subset\B{\ass\meet\gar}$, it follows that the contracts $\con=(\ass,\gar)$ and $\con' = (\ass,\gar')$ define the same class of implementations. In other words, Lemma~\ref{lem:consistency} tells us that the guarantees of a consistent contract can be replaced by guarantees that do not restrict $u$.

    Unfortunately, the inclusion $\Bi{\ass}\subset\Bi{\gar}$ is not sufficient for the contract $\contract{}$ to be consistent, as illustrated by the following example.

    \begin{example}\label{ex:consistency}
        Consider assumptions $\ass$ given by \eqref{eq:ass} with $A(s) = 0$, and guarantees $\gar$ be given by \eqref{eq:gar} with $G(s) =~1$ and $H(s) = s$. It is easily seen that $\Bi{\ass} = \cinf{1} = \Bi{\gar}$, and in particular, that $\Bi{\ass}\subset \Bi{\gar}$. Nevertheless, we will show that $\contract{}$ is not consistent. Let the system $\sys$ be of the form \eqref{eq:sys_io} in input-output form. In view of Remark~\ref{rem:implementation}, we have that $\sys$ implements $\con$ if and only if there exists a polynomial matrices $M_1(s)$ and $M_2(s)$ such that
        \begin{equation*}
            \bbm 1 & -s \ebm = \bbm M_1(s) & M_2(s) \ebm \bbm P(s) & -Q(s) \\ 0 & 0\ebm.
        \end{equation*}
        But the latter holds only if $M_1(s) \neq 0$ and $P(s)\inv Q(s) = s$, which contradicts the assumption that $\sys$ is in input-output form. It follows that $\con$ does not have an implementation and thus it is not consistent.
        
        At first sight, it might seem that the inconsistency of $\con$ is caused by the fact that the guarantees $\gar$ are not in input-output form, or more precisely, that $G(s)\inv H(s)$ is not proper. This is only partially true. While it is true that  $\con$ would be consistent if the guarantees $\gar$ were in input-output form ($\sys = \gar$ would be an implementation), this is generally not necessary. Indeed, if the assumptions $\ass$ were given by \eqref{eq:ass} with $A(s) = s$ and the guarantees $\gar$ were the same, then $\con$ would be consistent because the system $\sys$ given by \eqref{eq:sys_io} with $P(s) = 1$ and $Q(s) = 0$ would be an implementation. The latter follows from Remark~\ref{rem:implementation} and the fact that
       	\begin{equation*}
       		\bbm 1 & -s\ebm = \bbm 1 & 1\ebm\bbm  1 & 0 \\ 0 & -s\ebm.
       	\end{equation*}
        In other words, even when $\Bi{\ass}\subset\Bi{\gar}$, consistency still depends on both the assumptions $\ass$ and the guarantees $\gar$.
    \end{example}

    \section{Refinement and conjunction}\label{sec:refinement_and_conjunction}

    A central concept in any contract theory is the concept of contract refinement. Refinement allows one to compare contracts, which has an essential role in enabling the independent design of components. Following the meta-theory in \cite{benveniste2018}, we define contract refinement as follows.
    \begin{definition}\label{def:refinement}
    	The contract $\con_1$ \emph{refines} the contract $\con_2$, denoted by $\con_1\simby\con_2$, if the following conditions hold:
        \begin{enumerate}
            \item all compatible environments of $\con_2$ are compatible environments of $\con_1$;
            \item all implementations of $\con_1$ are implementations of $\con_2$.
        \end{enumerate}
        \vspace{1mm}
    \end{definition}

    Said differently, $\con_1$ refines $\con_2$ if it specifies stricter guarantees that have to be satisfied under weaker assumptions, i.e., for a larger class of compatible environments. Then $\con_1$ can be seen as expressing a tighter specification than $\con_2$. Just like implementation, we can characterize refinement on the basis of assumptions and guarantees alone. If $\contract{1}$ and $\contract{2}$, then it is not difficult to see that the first condition in Definition~\ref{def:refinement} holds if and only if $\Bi{\ass_2}\subset\Bi{\ass_1}$. For the second condition, we need to consider two cases depending on whether $\con_1$ is consistent. If $\con_1$ is not consistent, then the second condition is vacuously satisfied, hence $\con_1$ refines $\con_2$ if and only if $\Bi{\ass_2}\subset\Bi{\ass_1}$. If $\con_1$ is consistent, then we have the following theorem, whose proof can be found in the appendix.

    \begin{theorem}\label{thm:refinement}
    	If the contract $\contract{1}$ is consistent, then it refines the contract $\contract{2}$ if and only if $\Bi{\ass_2}\subset\Bi{\ass_1}$ and $\B{\ass_2\meet\gar_1}\subset\B{\gar_2}$. In this case, the contract $\con_2$ is guaranteed to be consistent.
    \end{theorem}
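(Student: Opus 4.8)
The plan is to reduce contract refinement to its two defining conditions in Definition~\ref{def:refinement} and to treat the two implications of the stated equivalence separately, using Theorem~\ref{thm:implementation} to replace ``implements'' by the single inclusion $\B{\ass\meet\sys}\subset\B{\gar}$ throughout. The first condition of refinement is equivalent to $\Bi{\ass_2}\subset\Bi{\ass_1}$, as already noted below Definition~\ref{def:refinement}, so the whole argument concerns the second condition: every implementation of $\con_1$ is an implementation of $\con_2$. I will phrase this as the requirement that $\B{\ass_2\meet\sys}\subset\B{\gar_2}$ for every $\sys$ in input-output form that implements $\con_1$.

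For sufficiency, assume $\Bi{\ass_2}\subset\Bi{\ass_1}$ and $\B{\ass_2\meet\gar_1}\subset\B{\gar_2}$, and let $\sys$ implement $\con_1$, i.e.\ $\B{\ass_1\meet\sys}\subset\B{\gar_1}$ by Theorem~\ref{thm:implementation}. A direct set-chase gives $\B{\ass_2\meet\sys}\subset\B{\ass_1\meet\sys}\subset\B{\gar_1}$, and since every $(u,y)\in\B{\ass_2\meet\sys}$ has $u\in\Bi{\ass_2}$, in fact $\B{\ass_2\meet\sys}\subset\B{\ass_2\meet\gar_1}\subset\B{\gar_2}$, so $\sys$ implements $\con_2$. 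This direction uses neither consistency nor Lemma~\ref{lem:consistency}.

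The necessity direction is the crux. Assuming $\con_1\simby\con_2$ and $\con_1$ consistent, condition~1 already gives $\Bi{\ass_2}\subset\Bi{\ass_1}$, so by Theorem~\ref{thm:inclusion} $A_1=NA_2$ for some polynomial $N(s)$, and it remains to prove $\B{\ass_2\meet\gar_1}\subset\B{\gar_2}$. First I would invoke Lemma~\ref{lem:consistency} to replace $\gar_1$ by guarantees $\gar_1'$ with $G_1'(s)$ of full row rank and $\B{\ass_1\meet\gar_1}=\B{\ass_1\meet\gar_1'}$; redoing the computation in that proof with $A_2$ in place of $A_1$ and using $H''=MNA_2$ yields $\B{\ass_2\meet\gar_1}=\B{\ass_2\meet\gar_1'}$, so we may assume $G_1(s)$ has full row rank. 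The key observation is then that any completion of $G_1(s)$ to a square invertible $P(s)=[G_1(s)^\top\ G^c(s)^\top]^\top$, paired with $Q(s)=[H_1(s)^\top\ Q^c(s)^\top]^\top$ chosen so that $P(s)\inv Q(s)$ is proper, yields an implementation of $\con_1$: with $M_1(s)=[I\ 0]$ and $M_2(s)=0$ the divisibility condition of Remark~\ref{rem:implementation} holds. Because the extra rows $G^c(s)$ are free, the roots of $\det P(s)$ --- and hence the autonomous output modes of the implementation --- can be placed arbitrarily. Exploiting this, for every polynomial-exponential trajectory $(u^*,y^*)\in\B{\ass_2\meet\gar_1}$ I would build a completion whose behaviour contains $(u^*,y^*)$: after bringing $G_1(s)$ to the form $[\,S(s)\ 0\,]$ by unimodular row and column operations the last $p-r$ outputs are free, and the free part of $y^*$ is annihilated by a suitable scalar polynomial, which supplies the missing rows $G^c(s)$ with $Q^c=0$. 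Condition~2 then forces each such $\B{\ass_2\meet\sys}\subset\B{\gar_2}$, so $\B{\gar_2}$ contains every polynomial-exponential trajectory of $\B{\ass_2\meet\gar_1}$.

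The main obstacle is precisely this mismatch: an implementation, being in input-output form, has only finitely many output modes and cannot individually realise the fully free outputs that guarantees with $r<p$ permit, so no single implementation recovers $\B{\ass_2\meet\gar_1}$. I resolve it in two steps --- the free placement of the roots of $\det P(s)$ above, which covers all polynomial-exponential guaranteed trajectories, and a density argument to pass from these to the whole behaviour: the polynomial-exponential trajectories are dense in any behaviour and behaviours are closed in $\cinf{}$ (see \cite{polderman1998}), whence $\B{\ass_2\meet\gar_1}\subset\B{\gar_2}$; equivalently, one may translate ``$\B{\gar_2}$ contains all polynomial-exponential trajectories of $\B{\ass_2\meet\gar_1}$'' into the module inclusion of Theorem~\ref{thm:inclusion}. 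The technical point that requires care is verifying that the completions can always be taken with $P(s)\inv Q(s)$ proper while still passing through a prescribed trajectory. Finally, the closing claim that $\con_2$ is consistent is immediate: $\con_1$ is consistent, so it has an implementation, which by condition~2 also implements $\con_2$.
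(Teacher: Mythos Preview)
Your sufficiency argument coincides with the paper's. For necessity the strategies diverge, and your construction has a gap that you flag but do not close --- and that in fact cannot be closed as stated.

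Your plan fixes the first block-rows of $(P,Q)$ to be $(G_1',H_1')$ and searches for completions $(G^c,Q^c)$ in input-output form. Such completions need not exist even when $\con_1$ is consistent. Take $p=2$, $m=1$, $G_1'(s)=\bbm 1&0\ebm$, $H_1'(s)=s$, $A_1(s)=s$: the contract is consistent (use $P=I_2$, $Q=0$, $M_1=\bbm 1&0\ebm$, $M_2=1$ in Remark~\ref{rem:implementation}), yet for every $P'=\bigl[\begin{smallmatrix}1&0\\a(s)&b(s)\end{smallmatrix}\bigr]$, $Q'=\bigl[\begin{smallmatrix}s\\c(s)\end{smallmatrix}\bigr]$ the first entry of $(P')^{-1}Q'$ equals $s$, which is never proper. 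The improper part of $H_1'$ is precisely what the assumptions are allowed to absorb, and by forcing $H_1'$ itself to appear as a row of $Q$ you discard that mechanism. So the ``technical point that requires care'' is not a detail but the crux, and without it the covering/density step never gets started.

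The paper sidesteps this by never completing $(G_1',H_1')$ directly. It takes an implementation $\sys$ supplied by consistency, writes $G_1'=M_1P$ and $H_1'=M_1Q+M_2A_1$ with $M_1$ of full row rank, completes $M_1$ to an invertible $\bigl[\begin{smallmatrix}M_1\\M_1'\end{smallmatrix}\bigr]$, and sets $P_k=\bigl[\begin{smallmatrix}M_1\\s^kM_1'\end{smallmatrix}\bigr]P$, $Q_k=\bigl[\begin{smallmatrix}M_1\\s^kM_1'\end{smallmatrix}\bigr]Q$. Then $P_k^{-1}Q_k=P^{-1}Q$ is automatically proper, so each $\sys_k$ is in input-output form and implements $\con_1$, hence $\con_2$. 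Writing the resulting factorization of $\bbm G_2 & -H_2\ebm$ through $P_k,Q_k,A_2$ for every $k$ and noting that the left-hand side is independent of $k$ forces the block multiplying $s^kM_1'$ to vanish, which yields the required divisibility through $\bigl[\begin{smallmatrix}G_1'&-H_1'\\0&-A_2\end{smallmatrix}\bigr]$ directly --- a purely algebraic argument with no trajectory chase or density.
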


    While refinement allows us to compare contracts, the concept of conjunction allows us to combine contracts. In particular, if a contract $\con$ refines another two contracts $\con_1$ and $\con_2$, then $\con$ can be interpreted as enforcing both $\con_1$'s and $\con_2$'s specification. This motivates the following definition.
   	\begin{definition}\label{def:conjunction}
   		The \emph{conjunction} of contracts $\con_1$ and $\con_2$, denoted by $\con_1\meet\con_2$, is the largest (with respect to contract refinement) contract that refines both $\con_1$ and $\con_2$.
   	\end{definition}
	\begin{remark}
		In Definition~\ref{def:conjunction}, by $\con_1\meet\con_2$ being the largest contract that refines both $\con_1$ and $\con_2$, we mean that $\con_1\meet\con_2$ refines both $\con_1$ and $\con_2$, and is refined by any contract that refines both $\con_1$ and $\con_2$. Note that such a contract does not necessarily exist.
	\end{remark}

    The definition of the conjunction $\con_1\meet\con_2$ has two aspects. On the one hand, $\con_1\meet\con_2$ is a contract that refines both $\con_1$ and $\con_2$, hence it expresses a fusion of the specifications that $\con_1$ and $\con_2$ express. On the other hand, $\con_1\meet\con_2$ is the \emph{largest} such contract, hence it expresses the \emph{least restrictive} fusion of the specifications that $\con_1$ and $\con_2$ express.

    Considering the first aspect of the conjunction $\con_1\meet\con_2$, we can always find a contract that refines both $\con_1$ and $\con_2$.
    \begin{lemma}\label{lem:conjunction}
        If  $\contract{1}$ and $\contract{2}$, then
        \begin{equation*}
            \con = (\ass_1\join\ass_2, \gar_1\meet\gar_2)
        \end{equation*}
        refines both $\con_1$ and $\con_2$, where
        \begin{equation*}
            \ass_1\join\ass_2: \bbm I & I \\ A_1\pddt & 0 \\ 0 & A_2\pddt \ebm \bbm l_1 \\ l_2 \ebm = \bbm I \\ 0 \\ 0 \ebm u,
        \end{equation*}
        is the \emph{join} of $\ass_1$ and $\ass_2$, and
        \begin{equation*}
            \gar_1\meet\gar_2: \bbm G_1 \pddt \\[1mm] G_2\pddt \ebm y = \bbm H_1\pddt \\[1mm] H_2\pddt \ebm u
        \end{equation*}
        is the \emph{meet} of $\gar_1$ and $\gar_2$.
    \end{lemma}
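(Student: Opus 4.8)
The plan is to verify directly the two conditions of Definition~\ref{def:refinement}, for $\con\simby\con_1$ and $\con\simby\con_2$ in parallel, using Theorem~\ref{thm:implementation} so as to avoid constructing any implementations. Write $\ass = \ass_1\join\ass_2$ and $\gar = \gar_1\meet\gar_2$. The two structural facts I would extract from the given representations are that the join \emph{enlarges} the input behaviour while the meet \emph{shrinks} the external behaviour. For the join, any $u\in\Bi{\ass_1}$ lies in $\Bi{\ass}$: choosing $l_1 = u$ and $l_2 = 0$ satisfies $l_1 + l_2 = u$ together with $A_1\pddt l_1 = 0$ and $A_2\pddt l_2 = 0$. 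Hence $\Bi{\ass_1}\subset\Bi{\ass}$, and symmetrically $\Bi{\ass_2}\subset\Bi{\ass}$ (in fact $\Bi{\ass}$ is the Minkowski sum $\Bi{\ass_1}+\Bi{\ass_2}$, but only the inclusions are needed). For the meet, the representation of $\gar$ stacks the equations of $\gar_1$ and $\gar_2$, so $(u,y)\in\B{\gar}$ forces both $G_1\pddt y = H_1\pddt u$ and $G_2\pddt y = H_2\pddt u$; thus $\B{\gar}\subset\B{\gar_1}$ and $\B{\gar}\subset\B{\gar_2}$.

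The inclusions $\Bi{\ass_i}\subset\Bi{\ass}$ settle the first condition of Definition~\ref{def:refinement}, since, as noted after that definition, the compatible environments of $\con_i$ are compatible environments of $\con$ precisely when $\Bi{\ass_i}\subset\Bi{\ass}$. For the second condition I would take an arbitrary implementation $\sys$ of $\con$ and show it implements $\con_i$. By Theorem~\ref{thm:implementation}, $\sys$ implements $\con$ exactly when $\B{\ass\meet\sys}\subset\B{\gar}$. The interconnection constrains only the shared variable, giving $\B{\ass\meet\sys} = \set{(u,y)\in\B{\sys}}{u\in\Bi{\ass}}$ and likewise for each $\ass_i$, so the inclusion $\Bi{\ass_i}\subset\Bi{\ass}$ yields the monotonicity $\B{\ass_i\meet\sys}\subset\B{\ass\meet\sys}$. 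Chaining this with the guarantee inclusion gives $\B{\ass_i\meet\sys}\subset\B{\ass\meet\sys}\subset\B{\gar}\subset\B{\gar_i}$, whence $\sys$ implements $\con_i$ by Theorem~\ref{thm:implementation} again. This establishes $\con\simby\con_1$ and $\con\simby\con_2$.

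Once the two structural facts are in place the argument is a short chaining of inclusions, so the proof is largely bookkeeping. The one point that I expect to require care is treating $\ass = \ass_1\join\ass_2$, whose representation carries the latent variables $l_1$ and $l_2$, as a genuine assumptions system of the form~\eqref{eq:ass}: I would invoke the elimination of latent variables to obtain a kernel representation $0 = \tilde A\pddt u$ of $\Bi{\ass}$, which both makes $\con$ a bona fide contract and justifies reading off $\B{\ass\meet\sys}$ as above, exactly as in~\eqref{eq:env_sys}. This latent-variable elimination is the only mild obstacle; everything else follows from the monotonicity of interconnection and intersection displayed above.
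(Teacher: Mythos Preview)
Your proposal is correct and follows essentially the same route as the paper's proof: extract the inclusions $\Bi{\ass_i}\subset\Bi{\ass_1\join\ass_2}$ and $\B{\gar_1\meet\gar_2}\subset\B{\gar_i}$ from the representations, then chain $\B{\ass_i\meet\sys}\subset\B{(\ass_1\join\ass_2)\meet\sys}\subset\B{\gar_1\meet\gar_2}\subset\B{\gar_i}$ via Theorem~\ref{thm:implementation}. The only cosmetic difference is that the paper explicitly splits off the inconsistent case (where the second refinement condition is vacuous), whereas you leave this implicit in ``take an arbitrary implementation''; and the paper defers the latent-variable elimination for $\ass_1\join\ass_2$ to a remark after the proof, exactly as you anticipate.
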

    \begin{proof}
        The join $\ass_1\join\ass_2$ is such that $u\in\Bi{\ass_1\join\ass_2}$ if and only if $u = l_1 + l_2$ for $l_1\in\Bi{\ass_1}$ and $l_2\in\Bi{\ass_2}$, while the meet $\gar_1\meet\gar_2$ is such that $(u,y)\in\B{\gar_1\meet\gar_2}$ if and only if $(u,y)\in\B{\gar_1}$ and $(u,y)\in\B{\gar_2}$. Therefore, we have that $\Bi{\ass_1\join\ass_2} = \Bi{\ass_1} + \B{\ass_2}$ and $\B{\gar_1\meet\gar_2} = \B{\gar_1}\cap\B{\gar_2}$. The former immediately implies that every environment compatible with $\con_1$ or $\con_2$ is also compatible with $\con$, hence the first condition for refinement is satisfied. Note that if $\con$ is not consistent, then it has no implementations and the second condition for refinement is vacuously satisfied. With this in mind, suppose that $\con$ is consistent. Let $\sys$ be an implementation and note that $\B{(\ass_1\join\ass_2)\meet\sys}\subset \B{\gar_1\meet\gar_2}$ due to Theorem~\ref{thm:implementation}. Since $\B{\ass_i\meet\sys}\subset \B{(\ass_1\join\ass_2)\meet\sys}$ and $\B{\gar_1\meet\gar_2}\subset \B{\gar_i}$ for $i\in\{1,2\}$, it follows that $\Bi{\ass_i\meet\sys}\subset \Bi{\gar_i}$, hence $\sys$ implements $\con_i$ due to Theorem~\ref{thm:implementation}. This shows that the second condition for refinement is satisfied and we conclude that $\con$ refines both $\con_1$ and $\con_2$.
    \end{proof}
    \begin{remark}
        Although we have defined $\ass_1\join\ass_2$ with the help of the \emph{latent variables} $l_1$ and $l_2$, due to \cite[Theorem~6.2.6]{polderman1998}, we can eliminate these to represent $\Bi{\ass_1\join\ass_2}$ by a system of the form \eqref{eq:ass}, like any other assumptions.
    \end{remark}

    While we can always find a contract that refines both $\con_1$ and $\con_2$, it is not clear whether a \emph{largest} such contract exists. In \cite{shali2021} it is shown that the conjunction of $\contract{1}$ and $\contract{2}$ exists when $\gar_1$ and $\gar_2$ are of the form \eqref{eq:gar} with $H(s) = 0$. In such a case, we have that
    \begin{equation}\label{eq:con1_meet_con2}
        \con_1\meet\con_2 = (\ass_1\join\ass_2, \gar_1\meet\gar_2).
    \end{equation}
    The following theorem, whose proof can be found in the appendix, shows that the conjunction  $\con_1\meet\con_2$ exists and has the same form in another two special cases.
    \begin{theorem}\label{thm:conjunction}
        The conjunction of contracts $\contract{1}$ and $\contract{2}$ is given by \eqref{eq:con1_meet_con2}
        if at least one of the following conditions holds:
        \begin{enumerate}
            \item $\Bi{\ass_1} = \Bi{\ass_2}$;
            \item $\B{(\ass_1\join\ass_2)\meet\gar_1} = \B{(\ass_1\join\ass_2)\meet\gar_2}$.\\
        \end{enumerate}
    \end{theorem}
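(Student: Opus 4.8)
The plan is to verify that $\con := (\ass_1\join\ass_2,\gar_1\meet\gar_2)$ satisfies the two requirements for being the conjunction: by Lemma~\ref{lem:conjunction} it already refines both $\con_1$ and $\con_2$, so it remains to show that it is the \emph{largest} such contract, i.e., that every contract $\con' = (\ass',\gar')$ refining both $\con_1$ and $\con_2$ satisfies $\con'\simby\con$. Throughout I will use the identities $\Bi{\ass_1\join\ass_2} = \Bi{\ass_1}+\Bi{\ass_2}$ and $\B{\gar_1\meet\gar_2} = \B{\gar_1}\cap\B{\gar_2}$ established in the proof of Lemma~\ref{lem:conjunction}. For the first condition of Definition~\ref{def:refinement} I must show $\Bi{\ass_1\join\ass_2}\subset\Bi{\ass'}$; this is immediate, since $\con'\simby\con_i$ gives $\Bi{\ass_i}\subset\Bi{\ass'}$ for $i\in\{1,2\}$, and summing the two inclusions yields $\Bi{\ass_1}+\Bi{\ass_2}\subset\Bi{\ass'}$.

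The real work lies in the second condition: every implementation $\sys$ of $\con'$ must implement $\con$. Fixing such a $\sys$, the relations $\con'\simby\con_i$ imply that $\sys$ implements $\con_i$, so Theorem~\ref{thm:implementation} gives $\B{\ass_i\meet\sys}\subset\B{\gar_i}$ for $i\in\{1,2\}$; by the same theorem it suffices to prove $\B{(\ass_1\join\ass_2)\meet\sys}\subset\B{\gar_1}\cap\B{\gar_2}$. The key step I would isolate is a decomposition lemma: because $\sys$ is in input-output form, $u$ is free in $\B{\sys}$, and since $\B{\sys}$ is a linear subspace one has
\begin{equation*}
	\B{(\ass_1\join\ass_2)\meet\sys} = \B{\ass_1\meet\sys} + \B{\ass_2\meet\sys}.
\end{equation*}
The inclusion $\supset$ follows from linearity together with $\Bi{\ass_i}\subset\Bi{\ass_1}+\Bi{\ass_2}$; for $\subset$, given $(u,y)$ with $u = u_1+u_2$ and $u_i\in\Bi{\ass_i}$, I would choose $y_1$ with $(u_1,y_1)\in\B{\sys}$ (possible since $u$ is free) and set $y_2 = y-y_1$, so that $(u_2,y_2) = (u-u_1,y-y_1)\in\B{\sys}$ by linearity, placing $(u_i,y_i)\in\B{\ass_i\meet\sys}$.

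With this decomposition in hand, the two cases follow quickly. Under condition (1), $\Bi{\ass_1} = \Bi{\ass_2}$ forces $\B{\ass_1\meet\sys} = \B{\ass_2\meet\sys} = \B{(\ass_1\join\ass_2)\meet\sys}$, and since this common behaviour lies in $\B{\gar_1}$ and in $\B{\gar_2}$, it lies in their intersection. Under condition (2), I would first observe that $\B{\ass_i\meet\sys}\subset\B{(\ass_1\join\ass_2)\meet\gar_i}$, because every $(u,y)\in\B{\ass_i\meet\sys}$ has $u\in\Bi{\ass_i}\subset\Bi{\ass_1}+\Bi{\ass_2}$ and $(u,y)\in\B{\gar_i}$; the hypothesis $\B{(\ass_1\join\ass_2)\meet\gar_1} = \B{(\ass_1\join\ass_2)\meet\gar_2}$ then lets me swap $\gar_i$ for the other guarantees, giving $\B{\ass_i\meet\sys}\subset\B{\gar_1}\cap\B{\gar_2}$ for both $i$. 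As $\B{\gar_1}\cap\B{\gar_2}$ is a linear subspace, it is closed under addition, so the decomposition yields $\B{(\ass_1\join\ass_2)\meet\sys}\subset\B{\gar_1}\cap\B{\gar_2}$, as required.

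I expect the decomposition lemma to be the main obstacle, and the crux of why the two special cases are needed. A naive attempt, decomposing $u = u_1+u_2$ and applying the inclusion $\B{\ass_i\meet\sys}\subset\B{\gar_i}$ to each piece, only places the two summands in $\B{\gar_1}$ and $\B{\gar_2}$ separately, not in the intersection, so it fails to show that the enlarged assumptions keep $\sys$ inside $\gar_1\meet\gar_2$. Conditions (1) and (2) are precisely the hypotheses that force each summand into the common intersection $\B{\gar_1}\cap\B{\gar_2}$; verifying the decomposition carefully and checking that the freeness of $u$ in input-output form is genuinely used are the points that need the most care.
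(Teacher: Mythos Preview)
Your argument is correct, and it takes a somewhat different route from the paper's. The paper works at the level of the \emph{guarantees} of the refining contract: assuming $\con' = (\ass',\gar')$ is consistent, it invokes Theorem~\ref{thm:refinement} to obtain $\B{\ass_i\meet\gar'}\subset\B{\gar_i}$, and then proves the decomposition
\[
\B{(\ass_1\join\ass_2)\meet\gar'}\subset \B{\ass_1\meet\gar'} + \B{\ass_2\meet\gar'},
\]
for which it needs the consistency of $\con'$ (via Lemma~\ref{lem:consistency}, to ensure that for $u_1\in\Bi{\ass_1}\subset\Bi{\ass'}$ there exists $y_1$ with $(u_1,y_1)\in\B{\gar'}$). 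You instead work at the level of \emph{implementations} $\sys$ of $\con'$, use the definition of refinement directly to get that $\sys$ implements each $\con_i$, and carry out the decomposition for $\B{(\ass_1\join\ass_2)\meet\sys}$, where the freeness of $u$ is automatic because $\sys$ is in input-output form. This buys you two things: you never need to invoke Theorem~\ref{thm:refinement} (whose proof is the most technical part of the paper), and the decomposition step requires no appeal to consistency. The paper's route, on the other hand, establishes the stronger intermediate fact $\B{(\ass_1\join\ass_2)\meet\gar'}\subset\B{\gar_1\meet\gar_2}$, which may be of independent interest. Both approaches handle the two cases in essentially the same way once the decomposition is in place.
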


    Note that the first condition in Theorem~\ref{thm:conjunction} is that $\con_1$ and $\con_2$ have the same assumptions, which implies that
    \begin{equation*}
        \con_1\meet\con_2 = (\ass_1,\gar_1\meet\gar_2) = (\ass_2, \gar_1\meet\gar_2).
    \end{equation*}
    On the other hand, the second condition is that $\con_1$ and $\con_2$ have the same guarantees when restricted to inputs in the assumptions of $\con_1\meet\con_2$, hence
    \begin{equation*}
        \con_1\meet\con_2 = (\ass_1\join\ass_2, \gar_1) = (\ass_1\join\ass_2, \gar_2).
    \end{equation*}

    \section{Illustrative example}\label{sec:example}

    In this section, we will show how contracts can be used as specifications in the context of a vehicle following system. To this end, consider two vehicles, the leader $\env$ and the follower $\sys$, which are shown graphically in Figure~\ref{fig:vehicle_following}.
    \begin{figure}[!htpb]
        \begin{tikzpicture}[->,>=stealth',shorten >=1pt,auto,node distance=2cm,
            semithick]
            \node (S) at (0,0) {\includegraphics[width=25mm]{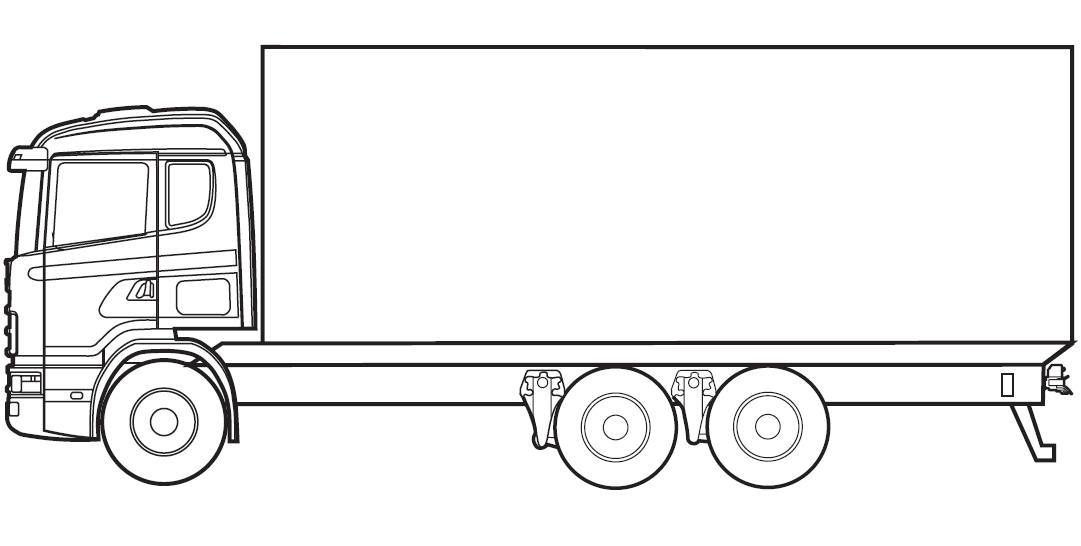}};
            \node at ([xshift=3mm, yshift=2mm] S) {$\sys$};
            \node (E) at (-3.6,0) {\includegraphics[width=25mm]{truck.png}};
            \node at ([xshift=3mm, yshift=2mm] E) {$\env$};

            \node at ([xshift=-4mm] E.west) {}; 

            \draw (E) -- node[midway, above] {$u$} (S);
            \draw (S) -- node[midway, above] {$y$} (2.25,0);
        \end{tikzpicture}
        \vspace{-2mm}
        \caption{A vehicle following system.}
        \label{fig:vehicle_following}
        \vspace{-3mm}
    \end{figure}
    We assume that the positions $p_\env$, $p_\sys$, and velocities $v_\env$, $v_\sys$, are external variables that are available for measurement. Our goal is to design $\sys$ in such a way as to guarantee tracking of the constant headway spacing policy
    \begin{equation}
        p_\env - p_\sys = hv_\sys,
    \end{equation}
    where $h > 0$ is a constant parameter. By tracking, we mean that $p_\env - p_\sys - hv_\sys$ converges to $0$ asymptotically. Note that this would be the case if
    \begin{equation}\label{eq:tracking}
        \ddt{} \left(p_\env - p_\sys - hv_\sys\right) = -k\left(p_\env - p_\sys - hv_\sys\right)
    \end{equation}
    for some constant $k > 0$.

    We can express this specification by a contract. We take the input of the follower vehicle $\sys$ to be $u = [p_\env\quad v_\env]\t$, and the output to be $y = [p_\sys\quad v_\sys]\t$. Regardless of the particular dynamics of the leading vehicle $\env$, it is safe to assume that $v_\env$ is the derivative of $p_\env$, hence the corresponding assumptions $\ass$ are given by \eqref{eq:ass} with
    \begin{equation*}
        A(s) = \bbm s & -1 \ebm.
    \end{equation*}
    On the other hand, in view of \eqref{eq:tracking}, the corresponding guarantees $\gar$ are given by \eqref{eq:gar} with
    \begin{equation*}
        G(s) = \bbm s+k & hs+ hk\ebm,\quad  H(s) = \bbm s + k & 0\ebm,
    \end{equation*}
    Then $\sys$  tracks the spacing policy (in the sense of \eqref{eq:tracking}) if $\sys$ implements the contract $\contract{}$.
	
	\begin{figure}
		\centering
		\begin{tikzpicture}[->,>=stealth',shorten >=1pt,auto,node distance=3cm,
			semithick]
			\tikzset{box/.style = {shape = rectangle,
					color=black,
					fill=white!96!black,
					text = black,
					inner sep = 5pt,
					minimum width = 35pt,
					minimum height = 20pt,
					draw}
			}
			\tikzset{sum/.style = {shape = circle,
					color=black,
					fill=white!96!black,
					text = black,
					minimum size=.5mm,
					inner sep=0pt,
					draw}
			}
			
			\node (u) at (-2,0) {};
			\node[box] (C) at (0,0) {C};
			\node[box] (V) at (2,0) {V};
			\node (y) at (4,0) {};
			
			\draw (C) -- node[midway, above] {$f$} (V);
			\draw (u) -- node[pos=0.35, above] {$u$} (C);
			\draw (V) -- node[pos=0.65, above] {$y$} (y);
			
			\draw ([xshift=2mm] V.east) -- ([xshift=2mm, yshift=-8mm] V.east) -- ([yshift = -8mm] C.center) -- (C);
			
			\draw[dashed, -] ([xshift=4mm, yshift=6mm] V.east) -- ([xshift=4mm, yshift=-14mm] V.east) -- node[midway, above] {$\sys$} ([xshift=-4mm, yshift=-14mm] C.west) -- ([xshift=-4mm, yshift=6mm] C.west) --  ([xshift=4mm, yshift=6mm] V.east);
			%

		\end{tikzpicture}
		\caption{The controlled vehicle $\sys$.}
		\label{fig:controlled_vehicle}
		\vspace{-5mm}
	\end{figure}

    We now turn to verifying that a given controlled vehicle $\sys$ implements $\con$. To this end, we will model the follower vehicle as a simple mechanical system with unit mass and dynamics given by
    \begin{equation*}\label{eq:vehicle_dinamics}
        \textup{V}: \bbm \ddt{} & -1 \\ 0 & \ddt{} \ebm y = \bbm 0 \\ 1\ebm f,
    \end{equation*}
    where $f$ is the force that will be provided by the controller. We claim that the controller
    \begin{equation*}
        \textup{C}:f = \bbm h\inv k & h\inv\ebm u - \bbm h\inv k & h\inv + k \ebm y
    \end{equation*}
    ensures that the controlled vehicle implements the contract $\contract{}$. To show this, note that the dynamics of the controlled vehicle $\sys = \textup{V}\meet \textup{C}$, shown in Figure~\ref{fig:controlled_vehicle}, are given by \eqref{eq:sys_io} with
    \begin{equation*}
        P(s) = \bbm s & -1 \\ h\inv k & s + h\inv + k \ebm,\quad  Q(s) = \bbm 0 & 0 \\ h\inv k & h\inv \ebm.
    \end{equation*}
    We can easily verify that
    \begin{equation*}
        \bbm G(s) & -H(s) \ebm = \bbm M_1(s) & M_2(s) \ebm \bbm P(s) & -Q(s) \\ 0 & -A(s) \ebm,
    \end{equation*}
    for $M_1(s) = \bbm 1 & h\ebm$ and $M_2(s) = 1$, which, in view of Remark~\ref{rem:implementation}, shows that $\sys$ implements $\con$. 
    
    Now, suppose that we have two types of leading vehicles, $\env_1$ and $\env_2$, whose dynamics are given by
    \begin{equation}\label{eq:env_i}
        \env_i: \bbm \ddt{} & -1 \\ 0 & \ddt{} + c_i \ebm u = \bbm 0 \\ 1 \ebm l, \quad i\in\{1,2\},
    \end{equation}
    where $c_1 = 0.25$ and $c_2 = 0.5$, and $l$ is a latent variable. Note that $\Bi{\env_1}\subset\Bi{\ass}$ and $\Bi{\env_2}\subset\Bi{\ass}$, hence $\env_1$ and $\env_2$ are environments compatible with $\con$. Consequently, we expect tracking of the headway spacing policy regardless of whether the controlled vehicle $\sys$ is following $\env_1$ or $\env_2$. This is indeed the case, as can be seen in Figure~\ref{fig:simulation}.

    To conclude this section, note that the output of the controlled vehicle $\sys$ represents its position and velocity, hence $\sys$ implements the contract $\con' = (\ass,\gar')$ with guarantees
    \begin{equation*}
        \gar': \bbm \ddt{} & -1 \ebm y = 0.
    \end{equation*}
    Furthermore, $\sys$ is easily seen to implement the conjunction $\con\meet\con' = (\ass, \gar\meet\gar')$, which we obtain by using Theorem~\ref{thm:conjunction}. Since $\Bo{\gar\meet\gar'} \subset \Bo{\gar'} = \Bi{\ass}$, an implementation $\sys$ of the conjunction $\con\meet\con'$ not only tracks the headway spacing policy, but it also guarantees that the interconnection $\env\meet\sys$, where $\env$ is an environment compatible with $\con\meet\con'$, can be interpreted as a compatible environment of $\con\meet\con'$. In other words, by defining appropriate contracts, we can ensure that several interconnected vehicles simultaneously track a given constant headway spacing policy.

    \begin{figure}
        \centering
        \input{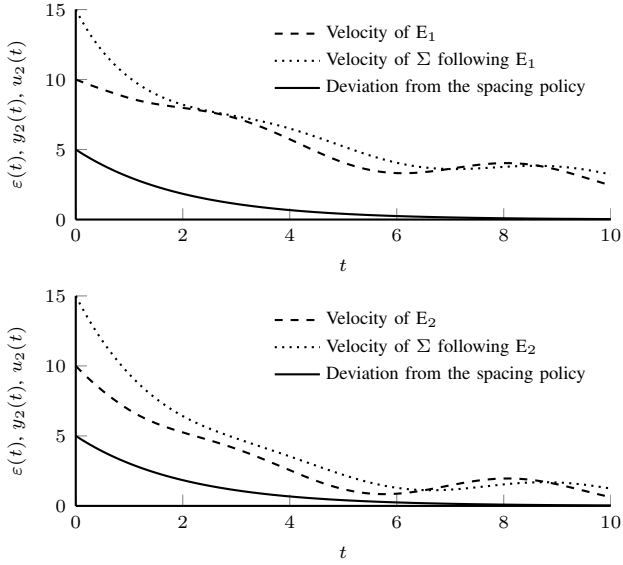}
        \vspace{-2mm}
        \caption{For each $i\in\{1,2\}$, the velocity $u_2$ of the leading vehicle $\env_i$, the velocity $y_2$ of the controlled vehicle $\sys$ following $\env_i$, and the deviation from the headway spacing policy $\epsilon = y_1 - u_1 + hy_2$, with $l$ in \eqref{eq:env_i} given by $l(t) = 1$ if $t\in[0,1]$ and $l(t) = \tfrac{1}{2} + \sin\left(\tfrac{3\pi}{8}t + \tfrac{3\pi}{16}\right)$ if $t>1$.}
        \label{fig:simulation}
        \vspace{-5mm}
    \end{figure}

    \section{Conclusion}\label{sec:conclusion}

    We presented assume-guarantee contracts for linear dynamical systems. These contracts consist of assumptions and guarantees, both of which are linear systems themselves, and which define a class of compatible environments and a class of implementations, respectively. We then characterized contract implementation on the basis of the assumptions and guarantees alone. This allowed us to derive a necessary condition for contract consistency, which we used to characterize contract refinement. Finally, through the notion of refinement, we also defined contract conjunction and characterized it in two special cases. Future work will focus on extending the theory to include appropriate notions of contract composition, which is crucial in enabling the modular design and analysis of interconnected systems.

    \appendix
    \vspace{1mm}
    \emph{Proof of Theorem~\ref{thm:refinement}:} Suppose that $\Bi{\ass_2}\subset\Bi{\ass_1}$ and $\B{\ass_2\meet\gar_1}\subset\B{\gar_2}$. Clearly, the inclusion $\Bi{\ass_2}\subset\Bi{\ass_1}$ implies that every environment compatible with $\con_2$ is also compatible with $\con_1$, such that the first condition in Definition~\ref{def:refinement} holds. To show that the second condition also holds, let $\sys$ be an implementation of $\con_1$. This implies that $\B{\ass_2\meet\sys}\subset\B{\gar_1}$ since $\ass_2$ is compatible with $\con_1$. But $\B{\ass_2\meet\sys}\subset\B{\gar_1}$ only if $\B{\ass_2\meet\sys}\subset\B{\ass_2\meet\gar_1}$, and $\B{\ass_2\meet\gar_1}\subset\B{\gar_2}$, hence $\B{\ass_2\meet\sys}\subset \B{\gar_2}$. From Theorem~\ref{thm:implementation} it follows that $\sys$ implements $\con_2$, which shows that every implementation of $\con_1$ is also an implementation of $\con_2$, i.e., the second condition in Definition~\ref{def:refinement} holds as well.

    Conversely, suppose that $\con_1$ refines $\con_2$. Since $\ass_2$ is an environment compatible with $\con_2$, it also needs to be compatible with $\con_1$, hence $\Bi{\ass_2}\subset\Bi{\ass_1}$. The only thing left to show is that $\B{\ass_2\meet\gar_1}\subset\B{\gar_2}$. In view of Theorem~\ref{thm:implementation} and the assumption that $\con_1$ is consistent, there exists a system $\sys$ as in \eqref{eq:sys_io} in input-output form such that $\B{\ass_1\meet\sys}\subset\B{\gar_1}$. Note that the latter holds if and only if $\B{\ass_1\meet\sys}\subset\B{\ass_1\meet\gar_1}$. Furthermore, due to Lemma~\ref{lem:consistency}, we know there there exist guarantees $\gar_1'$ of the form \eqref{eq:gar'} with $G'(s)$ that has full row rank such that $\B{\ass_1\meet\gar_1} = \B{\ass_1\meet\gar_1'}$, from which it follows that
    \begin{equation*}
        \B{\ass_1\meet\sys}\subset\B{\ass_1\meet\gar_1'}\subset\B{\gar_1'}.
    \end{equation*}
    Due to Theorem~\ref{thm:inclusion}, the latter holds if and only if there exist polynomial matrices $M_1(s)$ and $M_2(s)$ satisfying
    \begin{equation*}
        \bbm G_1'(s) & -H_1'(s) \ebm = \bbm M_1(s) & M_2(s) \ebm\bbm P(s) & -Q(s) \\ 0 & -A_1(s) \ebm
    \end{equation*}
    Note that $M_1(s)$ has full row rank because $G_1'(s)$ has full row rank and $P(s)$ is invertible. This implies that there exists a polynomial matrix $M'_{1}(s)$ for which $[M_1(s)\t\quad M_1'(s)\t]$ is invertible. Let $k$ be a positive integer and define
    \begin{equation*}
        P_k(s) = \bbm M_1(s) \\ s^kM_{1}'(s) \ebm P(s),\quad Q_k(s) = \bbm M_1(s) \\ s^kM_{1}'(s) \ebm Q(s).
    \end{equation*}
    We will show that the system
    \begin{equation*}
        \sys_k: P_k\pddt y = Q_k\pddt u
    \end{equation*}
    implements $\con_1$ for any positive integer $k$. Note that $P_k(s)$ is invertible, $P_k(s)\inv Q_k(s) = P(s)\inv Q(s)$ is proper, and
    \begin{equation*}
        \bbm G_1'(s) & -H_1'(s) \ebm = \bbm[r] [I \quad 0] & M_2(s) \ebm\bbm P_k(s) & -Q_k(s) \\ 0 & -A_1(s) \ebm,
    \end{equation*}
    that is, $\sys_k$ is in input-output form and $\B{\ass_1\meet\sys_k}\subset \B{\gar_1'}$. As the latter holds only if $\B{\ass_1\meet\sys_k}\subset\B{\ass_1\meet\gar_1'}$, and $\B{\ass_1\meet\gar_1'} = \B{\ass_1\meet\gar_1}$, it follows that
    \begin{equation*}
        \B{\ass_1\meet\sys_k}\subset\B{\ass_1\meet\gar_1}\subset\B{\gar_1},
    \end{equation*}
    hence $\sys_k$ implements $\con_1$ due to Theorem~\ref{thm:implementation}. But then $\sys_k$ must also implement $\con_2$, which is the case if and only if $\B{\ass_2\meet\sys_k}\subset \B{\gar_2}$. The latter holds if and only if there exist polynomial matrices $N_{1,k}(s)$ and $N_{2,k}(s)$ such that
    \begin{equation*}
        \bbm G_2(s) \hspace{-1mm}& -H_2(s) \ebm  = \bbm N_{1,k}(s) \hspace{-1mm}& N_{2,k}(s)\ebm \bbm P_k(s) \hspace{-1mm}& -Q_k(s) \\ 0 \hspace{-1mm}& -A_2(s)\ebm.
    \end{equation*}
    In particular, we have that
    \begin{equation*}
        G_2(s) = N_{1,k}(s)P_k(s) = N_{1,k}(s) \bbm M_1(s) \\ s^k M_1'(s) \ebm P(s).
    \end{equation*}
    which implies that
    \begin{equation}\label{eq:arbitrary_k}
        G_2(s)P(s)\inv \bbm M_1(s) \\ M_1'(s) \ebm\inv =  N_{1,k}(s) \bbm I & 0 \\ 0 & s^k I \ebm,
    \end{equation}
    As the left hand side of \eqref{eq:arbitrary_k} is independent of $k$, we must have that the right-hand side is independent as well. Therefore, if we partition $N_{1,k}(s) = \bbm N_{11,k}(s) & N_{12,k}(s) \ebm$, then there exist polynomial matrices $N_{11}(s)$ and $N_{22}(s)$ such that $N_{11,k}(s) = N_{11}(s)$ and $s^kN_{12,k}(s) = N_{12}(s)$ for all positive integers $k$. This is possible only if $N_{12,k}(s) = 0$ for all positive integers $k$, which implies that
    \begin{equation*}
        G_2(s) = N_{11}(s) M_1(s) P(s) = N_{11}(s) G_1'(s),
    \end{equation*}
    and, similarly, that
    \begin{align*}
        H_2(s) &= N_{11}(s) M_1(s) Q(s) + N_{2,k}(s)A_2(s) \\
        &= N_{11}(s)(H_1'(s) - M_2(s)A_1(s)) + N_{2,k}(s)A_2(s).
    \end{align*}
    Since $\Bi{\ass_2}\subset \Bi{\ass_1}$, there exists a polynomial matrix $R(s)$ such that $A_1(s) = R(s)A_2(s)$, and thus
    \begin{equation*}
        \bbm G_2(s) & - H_2(s) \ebm = \bbm N_{11}(s) & N_{2}(s) \ebm \bbm G_1'(s) & -H_1'(s) \\ 0 & -A_2(s) \ebm,
    \end{equation*}
    where $N_2(s) = N_{11}(s)M_2(s)R(s) - N_{2,k}(s)$ for an arbitrary positive integer $k$. Finally, this yields $\B{\ass_2\meet\gar_1'}\subset \B{\gar_2}$, and since $\B{\ass_2\meet\gar_1'} = \B{\ass_2\meet\gar_1}$, we conclude that $\B{\ass_2\meet\gar_1}\subset \B{\gar_2}$, as desired. \hfill\QED

    \vspace{1mm}
    \emph{Proof of Theorem~\ref{thm:conjunction}:} Due to Lemma~\ref{lem:conjunction}, we already know that the contract $\con_1\meet\con_2$ given in \eqref{eq:con1_meet_con2} refines both $\con_1$ and $\con_2$. Therefore, we only need to show that $\con_1\meet\con_2$ is the largest such contract. To this end, let $\contract{}$ be a contract that refines both $\con_1$ and $\con_2$. Then every environment compatible with $\con_1$ or $\con_2$ must also be compatible with $\con$. This is the case if and only if $\Bi{\ass_1}\subset \Bi{\ass}$ and $\Bi{\ass_2}\subset \Bi{\ass}$, which is equivalent to $\Bi{\ass_1}+\Bi{\ass_2} \subset \Bi{\ass}$ because $\Bi{\ass}$ is a linear space. The latter can be written as $\Bi{\ass_1\join\ass_2}\subset \Bi{\ass}$, which implies that every environment compatible with $\con_1\meet\con_2$ is also compatible with $\con$.

    Next, we will show that every implementation of $\con$ is also an implementation of $\con_1\meet\con_2$. This is vacuously true when $\con$ is not consistent. Suppose that $\con$ is consistent. Since $\con$ refines both $\con_1$ and $\con_2$, from Theorem~\ref{thm:implementation} it follows  that
    \begin{equation}\label{eq:con_refines_con1_and_con2}
        \B{\ass_1\meet\gar}\subset \B{\gar_1} \qand \B{\ass_2\meet\gar}\subset \B{\gar_2}.
    \end{equation}
    Furthermore, since $\Bi{\ass_1\join\ass_2}\subset \Bi{\ass}$, the same theorem tells us that $\con$ refines $\con_1\meet\con_2$ if and only if
    \begin{equation}\label{eq:con_refines_con1_meet_con2}
        \B{(\ass_1\join\ass_2)\meet\gar}\subset \B{\gar_1\meet\gar_2}.
    \end{equation}
    With this in mind, we will show that \eqref{eq:con_refines_con1_and_con2} implies \eqref{eq:con_refines_con1_meet_con2} in the following two cases.
    \begin{enumerate}
        \item Suppose that $\Bi{\ass_1} = \Bi{\ass_2}$. Then
        \begin{equation}
            \Bi{\ass_1\join\ass_2} = \Bi{\ass_1} = \Bi{\ass_2},
        \end{equation}
        from which it follows that
        \begin{equation*}
            \B{\ass_1\meet\gar} = \B{\ass_2\meet\gar} = \B{(\ass_1\join\ass_2)\meet\gar},
        \end{equation*}
        and thus \eqref{eq:con_refines_con1_and_con2} implies \eqref{eq:con_refines_con1_meet_con2}.
        \item Suppose that
        \begin{equation}\label{eq:same_guarantees}
            \B{(\ass_1\join\ass_2)\meet\gar_1} = \B{(\ass_1\join\ass_2)\meet\gar_2}
        \end{equation}
        Let $i\in\{1,2\}$ and note that \eqref{eq:con_refines_con1_and_con2} holds only if
        \begin{equation}\label{eq:assi_meet_gari}
            \B{\ass_i\meet\gar}\subset\B{\ass_i\meet\gar_i}.
        \end{equation}
        Since $\Bi{\ass_i}\subset \Bi{\ass_1\join\ass_2}$, \eqref{eq:assi_meet_gari} yields
        \begin{equation}\label{eq:ass1_join_ass2_meet_gari}
            \B{\ass_i\meet\gar}\subset \B{(\ass_1\join\ass_2)\meet\gar_i}.
        \end{equation}
        Furthermore, from \eqref{eq:same_guarantees} it follows that
        \begin{equation}\label{eq:gar1_meet_gar2}
            \B{(\ass_1\join\ass_2)\meet\gar_i} \subset \B{\gar_1\meet\gar_2},
        \end{equation}
        which, together with \eqref{eq:ass1_join_ass2_meet_gari}, implies that
        \begin{equation}
            \B{\ass_i\meet\gar} \subset \B{\gar_1\meet\gar_2}
        \end{equation}
        But $\B{\gar_1\meet\gar_2}$ is a linear space, hence
        \begin{equation}\label{eq:sum_subset_gar1_meet_gar2}
            \B{\ass_1\meet\gar} + \B{\ass_2\meet\gar}\subset \B{\gar_1\meet\gar_2}.
        \end{equation}
        Now, note that \eqref{eq:con_refines_con1_meet_con2} would follow from \eqref{eq:sum_subset_gar1_meet_gar2} if
        \begin{equation}\label{eq:join_meet_gar_subset_sum}
            \B{(\ass_1\join\ass_2)\meet\gar} \subset \B{\ass_1\meet\gar} + \B{\ass_2\meet\gar}.
        \end{equation}
        To show that \eqref{eq:join_meet_gar_subset_sum} holds, let $(u,y)\in \B{(\ass_1\join\ass_2)\meet\gar}$. It follows that $u\in\Bi{\ass_1\join\ass_2}$ and $(u,y)\in\B{\gar}$, which implies that $u = u_1 + u_2$, where $u_1\in\Bi{\ass_1}$ and $u_2\in\Bi{\ass_2}$. Since $\con$ is consistent, we must have that $\Bi{\ass}\subset\Bi{\gar}$. Therefore, for all $u\in\Bi{\ass}$, there exists $y$ such that $(u,y)\in\Bi{\gar}$. Given that $\Bi{\ass_1}\subset\Bi{\ass}$, this means that there exist $y_1$ such that $(u_1,y_1)\in\B{\gar}$. Since $\B{\gar}$ is a linear space and $(u, y)\in\B{\gar}$, we get $(u_2, y - y_2)\in\B{\gar}$. But then $(u_1,y_1)\in\B{\ass_1\meet\gar}$ and $(u_2,y-y_1)\in\B{\ass_2\meet\gar}$, hence $(u,y)\in\B{\ass_1\meet\gar} + \B{\ass_2\meet\gar}$, which shows that \eqref{eq:join_meet_gar_subset_sum} holds, as desired.
    \end{enumerate}

    In both cases, we have seen that $\eqref{eq:con_refines_con1_meet_con2}$ holds, hence $\con$ refines $\con_1\meet\con_2$. Since this is the case for any $\con$ that refines both $\con_1$ and $\con_2$, we conclude that the contract $\con_1\meet\con_2$ given in \eqref{eq:con1_meet_con2} is indeed the largest contract that refines both $\con_1$ and $\con_2$. \hfil\QED

	\bibliographystyle{ieeetr}
	\bibliography{../../references/all}
\end{document}